\newcommand{\cA}{\mathcal{A}}
\newcommand{\cC}{\mathcal{C}}
\newcommand{\cD}{\mathcal{D}}
\newcommand{\cG}{\mathcal{G}}
\newcommand{\cT}{\mathcal{T}}
\newcommand{\cH}{\mathcal{H}}
\newcommand{\cU}{\mathcal{U}}
\newcommand{\cS}{\mathcal{S}}
\newcommand{\bZ}{\mathbb{Z}}
\DeclareMathOperator{\hocolim}{Hocolim}
\DeclareMathOperator{\Hom}{Hom}
\DeclareMathOperator{\End}{End}
\DeclareMathOperator{\Ext}{Ext}
\DeclareMathOperator{\Tor}{Tor}
\DeclareMathOperator{\Ker}{Ker}
\DeclareMathOperator{\Susp}{Susp}
\newcommand*{\rMod}{\textrm{\textup{Mod-}}}
\newcommand*{\rmod}{\textrm{\textup{mod-}}}
\newtheorem{theorem}{Theorem}[section]
\newtheorem{lemma}[theorem]{Lemma}
\newtheorem{proposition}[theorem]{Proposition}
\newtheorem{corollary}[theorem]{Corollary}
\theoremstyle{definition}
\newtheorem{remark}[theorem]{Remark}
\def\dualita#1#2{\mathrel{
                 \mathop{\vcenter{
                 \offinterlineskip
                 \hbox to 1.2truecm{\rightarrowfill}
                 \hbox to 1.2truecm{\leftarrowfill}}}
                 \limits_{#2}^{#1}}}
\def\dual{\mathrel{
                 \mathop{\vcenter{
                 \offinterlineskip
                 \hbox to .6truecm{\rightarrowfill}
                 \hbox to .6truecm{\leftarrowfill}}}
                 }}
\def\cDll#1{{\cD}^{\leq #1}}
\def\cDgg#1{{\cD}^{\geq #1}}
\begin{document}
\title[On a property of $t$-structures generated by tilting modules]{On a property of $t$-structures generated by non-classical tilting modules}
\author[F. Mattiello]{Francesco Mattiello}

\address{ Dipartimento di Matematica, Universit\`a degli studi di Padova, via Trieste 63, I-35121 Padova, Italy}
\email{mattiell@math.unipd.it}
\date{}
\keywords{triangulated category, $t$-structure, tilting equivalence}
\thanks{The author is supported by Assegno di ricerca ``Tilting theory in triagulated categories'' del Dipartimento di Matematica dellÕUniversit\`a degli Studi di Padova and by Progetto di Eccellenza della fondazione Cariparo.\\
The author thanks his institution for its help. He also thanks Manuel Saor\'in for the discussions on the topic.}

\begin{abstract}
Let $R$ be a ring and $T \in \rMod R$ be a (non-classical) tilting module of finite projective dimension. Let 
$\cT=(\cT^{\leq0}, \cT^{\geq0})$ be the $t$-structure on $D(R)$ generated by $T$ and $\cD=(\cD^{\leq0}, \cD^{\geq0})$ be the natural $t$-structure. We show that the pair $(\cD, \cT)$ is right filterable in the sense of \cite{FMT}, that is, for any $i\in\mathbb Z$ the intersection $\cD^{\geq i}\cap \cT^{\geq 0}$ is the co-aisle of a $t$-structure. As a consequence, the heart of $\cT$ is derived equivalent to $\rMod R$.

\end{abstract}

\maketitle

\section*{Introduction}

Tilting theory has its roots in the representation theory of finite dimensional algebras and has been widely used for constructing equivalences between categories. It is now considered an essential tool of many areas of Mathematics, including group theory, commutative and non-commutative algebraic geometry, and algebraic topology. Tilting theory appeared in the early seventies, when Bernstein, Gelfand and Ponomarev investigated the reflection functors in connection with giving a new proof of Gabriel's theorem~\cite{Gab}. This work was later generalized by Brenner and Butler in~\cite{zbMATH03697330}, who introduced the actual notion of a tilting module for finite dimensional and Artin algebras $A$ and the resulting tilting theorem between $\rmod A$ and the finitely generated modules over the endomorphism ring of a tilting 
$A$-module, a generalization of the so-called Morita Theory. Later the work of Brenner and Butler was simplified and generalized by Happel and Ringel~\cite{HR82}, Miyashita~\cite{Miya} and Colby and Fuller~\cite{CF90} to the case of modules over arbitrary rings. Given a tilting $A$-module $T$ of projective dimension $1$, there is a pair of equivalences $\Ker(\Ext^1_A(T, -)) \stackrel{\sim}\to \Ker(\Tor_B^1 (-, T ))$ and 
$\Ker(\Hom(T, -)) \stackrel{\sim}\to \Ker(- \otimes_BT)$ between the members of the torsion pairs $(\Ker(\Ext^1_A(T, -)),\Ker(\Hom_A(T, -)) )$ of $A$-modules and 
$(\Ker(- \otimes_BT),\Ker(\Tor_B^1 (-, T )))$ of $B$-modules, where $B=\End_A(T)$.

Soon it became clear that tilting modules and, more generally, tilting objects had an important role in the framework of derived categories and $t$-structures. Happel, Reiten and Smal{\o}~\cite{MR1327209} introduced a technique to construct, starting from a given $t$-structure $\cD$ and a torsion pair on its heart, a new $t$-structure, called the {\it tilted} $t$-structure with respect to the given torsion pair. In particular, given a classical $1$-tilting object $T$ in a Grothendieck category $\cG$, the heart of the $t$-structure obtained by tilting the natural $t$-structure in the derived category $D(\cG)$ with respect to the torsion pair generated by $T$ is derived equivalent to $\cG$. 

In~\cite{FMT} the authors generalise the Happel-Reiten-Smal{\o} result and recover the torsion torsion-free decomposition, passing from classical $1$-tilting objects to classical $n$-tilting objects. Given a Grothendieck category $\cG$ with a classical $n$-tilting object $T$, denoted by $\cT=(\cT^{\leq0},\cT^{\geq0})$ the $t$-structure on 
$D(\cG)$ generated by $T$ and by $\cD=(\cD^{\leq0},\cD^{\geq0})$ the natural $t$-structure, the pair $(\cD, \cT)$ is {\it (right) $n$-tilting}, that is: 1) $\cD^{\leq {-n}} \subseteq \cT^{\leq {0}} \subseteq \cD^{\leq {0}}$, and this relation is ``strict''; 2) it is {\it (right) filterable}, that is, for any $i\in\mathbb Z$ the intersection $\cD^{\geq i}\cap \cT^{\geq 0}$ is a co-aisle of a $t$-structure; 3) if $\cH_\cT$ denotes the heart of $\cT$, then $\cG\cap \cH_\cT$ is a cogenerating class of $\cG$, that is, each object of $\cG$ embeds in an object of $\cG\cap \cH_\cT$. An important consequence of this fact is that the natural inclusion $\cH_\cT \hookrightarrow D(\cG)$ extends to a triangulated equivalence between $D(\cH_\cT)$ and $D(\cG)$ (see~\cite[Theorem~5.7]{FMT} 
and the more general result~\cite[Corollary~2.3]{FMS} which holds also without the filterability property). In this paper we show that these results are true for non-classical tilting modules of finite projective dimension. Let $R$ be a ring and $T \in \rMod R$ be a (non-classical) tilting module of finite projective dimension $n$. Let 
$\cT=(\cT^{\leq0}, \cT^{\geq0})$ be the $t$-structure on $D(R)$ generated by $T$ and $\cD=(\cD^{\leq0}, \cD^{\geq0})$ be the natural $t$-structure. We first give an explicit characterization of the aisle and the co-aisle of $\cT$. Then we prove that the pair $(\cD,\cT)$ satisfies $\cD^{\leq {-n}} \subseteq \cT^{\leq {0}} \subseteq \cD^{\leq {0}}$. Then we show that this pair is right filterable 
As a corollary, we 
obtain~\cite[Corollary~2.3]{FMS} in the case of modules: the heart of $\cT$ is derived equivalent to $\rMod R$, and this equivalence extends the natural inclusion $\cH_\cT \hookrightarrow D(R)$.

\section{Notation and preliminaries}

Given an additive category $\cS$, for any full subcategory $\mathcal{S}'$ of $\mathcal{S}$ we denote by 
$^\perp\mathcal{S}'$ the following full subcategory of $\mathcal{S}$: 
\[
^\perp\mathcal{S}':=\{X\in \mathcal{S} \,|\, \Hom_\mathcal{S}(X,Y)=0, \text{~for all~} Y\in\mathcal{S}'\},
\]
 and by $\mathcal{S}'^\perp$ the following full subcategory of $\mathcal{S}$: 
\[
\mathcal{S}'^\perp:=\{X\in \mathcal{S} \,|\, \Hom_\mathcal{S}(Y,X)=0, \text{~for all~} Y\in\mathcal{S}'\}.
\]

Let $\mathcal{C}$ be a triangulated category. A full subcategory $\cS$ of $\cC$ is called {\it suspended} (resp. {\it cosuspended}) if it is closed under extensions and positive shifts (resp. extensions and negative shifts); $\cS$ is called {\it localizing} if it is suspended and cosuspended, closed under direct summands and under all set-indexed direct sums that exist in $\cC$.
Recall from~\cite{MR751966} that a {\it $t$-structure} in $\cC$ is a pair $\cD=(\cD^{\leq0},\cD^{\geq0})$ of full subcategories of $\cC$ such that, setting $\cD^{\leq n}:=\cD^{\leq0}[-n]$ and $\cDgg n:=\cDgg0[-n]$, one has:
\begin{enumerate}
\item[\rm (i)] $\cDll0\subseteq\cDll1$ and $\cDgg0\supseteq\cDgg1$;
\item[\rm (ii)] $\Hom_\cC(X,Y)=0$, for every $X$ in $\cDll0$ and every $Y$ in $\cDgg1$;
\item[\rm (iii)] For any object $X\in\cC$ there exists a distinguished triangle 
\[A\to X\to B \to A[1]
\]
in $\cC$, with
$A\in\cDll0$ 
and $B\in\cDgg1$.
\end{enumerate}
The classes $\cD^{\leq 0}$ and $\cD^{\geq 0}$ are called the \emph{aisle} and the \emph{co-aisle} of the $t$-structure $\cD$. In such case one has $\mathcal{D}^{\geq0}=(\mathcal{D}^{\leq0})^\perp[1]$ and $\mathcal{D}^{\leq0}=\;^\perp(\mathcal{D}^{\geq1})=\;^\perp((\mathcal{D}^{\leq0})^\perp)$. The objects $A$ and $B$ in the above triangle are uniquely determined by 
$X$, up to isomorphism, and define functors $\delta^{\leq0}\colon \mathcal{D} \to \mathcal{D}^{\leq0}$ and 
$\delta^{\geq0} \colon \mathcal{D} \to \mathcal{D}^{\geq0}$ (called {\it truncation functors}) which are right and left adjoints to the respective inclusion functors. The full subcategory $\mathcal{H}_\cD = \mathcal{D}^{\leq0} \cap \mathcal{D}^{\geq0}$ is called the {\it heart} of the $t$-structure and it is an abelian category, where the short exact sequences ``are'' the triangles in 
$\mathcal{C}$ with their three terms in $\mathcal{H}_\cD$. The assignments 
$X\leadsto(\delta^{\leq0} \circ \delta^{\geq0})(X)$ and 
$X \leadsto (\delta^{\geq0} \circ \delta^{\leq0})(X)$ define naturally isomorphic functors $\mathcal{C} \to \mathcal{H}_\cD$ which are cohomological (see~\cite{MR751966}). 
Given an abelian category $\mathcal{A}$, its derived category $\mathcal{D}(\mathcal{A})$ is a triangulated category which admits a 
$t$-structure $\cD=(\mathcal{D}^{\leq0},\mathcal{D}^{\geq0})$, called the {\it natural $t$-structure}, where 
$\mathcal{D}^{\leq0}$ (resp. $\mathcal{D}^{\geq0}$) is the full subcategory of complexes without cohomology in positive (resp. negative) degrees. The heart of $\cD$ is (naturally equivalent to) $\mathcal{A}$.

Following~\cite{FMT}, we say that a pair of $t$-structures $(\cD,\cT)$ has \emph{shift $k\in\mathbb Z$} and \emph{gap} $n\in\Bbb N$
if $k$ is the maximal number such that $\cT^{\leq k} \subseteq \cD^{\leq 0}$ (or equivalently $\cD^{\geq 0}\subseteq \cT^{\geq k}$) and $n$ is the minimal number such that $\cD^{\leq -n}\subseteq \cT^{\leq k}$ (or equivalently $\cT^{\geq k} \subseteq \cD^{\geq -n}$). Such a $t$-structure will be called of \emph{type} $(n,k)$. Let $\cD$ be a fixed $t$-structure in $\cC$. We say that a $t$-structure $\cT$ in $\cC$ is \emph{right $\cD$-filterable} if for any $i\in\mathbb Z$ the intersection $\cD^{\geq i}\cap \cT^{\geq 0}$ is a co-aisle.
The right filterability is a symmetric notion: 
therefore one says that the pair $(\cD,\cT)$ is \emph{right filterable} if either
$\cT$ is right $\cD$-filterable or equivalently $\cD$ is right $\cT$-filterable. Similarly one defines the notion of \emph{left filterable} pair of $t$-structures (see~\cite{FMT} for more details). 

A pair $(\cD,\cT)$ of $t$-structures in a triangulated category $\cC$ is \emph{$n$-tilting} (resp. \emph{$n$-cotilting}) if:
\begin{enumerate}
\item $(\cD,\cT)$ is filterable of type $(n,0)$; and
\item the full subcategory $\cH_\cD \cap \cH_\cT$ of $\cH_\cD$ cogenerates $\cH_\cD$
(resp. the full subcategory $\cH_\cD \cap \cH_\cT[-n]$ of $\cH_\cD$ generates $\cH_\cD$).
\end{enumerate}
The pair $(\cD,\cT)$ is \emph{right $n$-tilting} (resp. \emph{right $n$-cotilting}) if it is $n$-tilting (resp. $n$-cotilting) and right filterable; \emph{left $n$-tilting} and \emph{left $n$-cotilting} pair of $t$-structures are similarly defined.

\begin{theorem}\cite[Theorem 5.7]{FMT}\label{genheartderiv}
Let $\cA$ be an abelian category and $\cD$ be the natural $t$-structure in $D(\cA)$.
Suppose that $(\cD,\cT)$ is a 
$n$-tilting pair
(resp. 
$n$-cotilting pair)
of $t$-structures; then there is a triangle equivalence
\[
\xymatrix{D(\cH_\cT) \ar[r]^-\simeq& D(\cH_\cD)=D(\cA)}
\]
which extends the natural inclusion $\cH_\cT \subseteq D(\cA)$.
\end{theorem}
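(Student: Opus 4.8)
The plan is to prove the theorem through the \emph{realization functor} attached to the $t$-structure $\cT$ and to show that it is a triangle equivalence. Since $\cA$ is abelian, $D(\cA)$ admits a filtered enhancement (a filtered derived category, equivalently an $f$-category over $D(\cA)$), so the construction of~\cite{MR751966} produces a triangle functor $\mathrm{real}_\cT\colon D^b(\cH_\cT)\to D(\cA)$ that commutes with shifts and whose restriction to the heart is the tautological embedding $\cH_\cT\hookrightarrow D(\cA)$. The unbounded statement $D(\cH_\cT)\simeq D(\cA)$ I would obtain only at the end, once the bounded equivalence $D^b(\cH_\cT)\simeq D^b(\cA)$ is available: in the situations of interest $\cH_\cT$ is again a nice (for us, Grothendieck) abelian category, $\mathrm{real}_\cT$ preserves coproducts, and $D(\cA)$ is generated by $\cH_\cT$ under coproducts and triangles, which lets one bootstrap the bounded equivalence to the unbounded one.

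Full faithfulness is the heart of the matter. By a standard reduction it amounts to showing that for all $X,Y\in\cH_\cT$ the canonical map $\Ext^i_{\cH_\cT}(X,Y)\to\Hom_{D(\cA)}(X,Y[i])$ is bijective for every $i\geq 0$; it is automatically bijective for $i\in\{0,1\}$, so the real content is injectivity and surjectivity for $i\geq 2$. Here I would use both hypotheses on the pair. Filterability provides, for each $j$, a $t$-structure $\cV_j$ with co-aisle $\cD^{\geq j}\cap\cT^{\geq 0}$; since $(\cD,\cT)$ has type $(n,0)$ only the $\cV_j$ with $-n\leq j\leq 0$ are genuinely new, and $\cV_0=\cD$ while $\cV_{-n}=\cT$, so these assemble into a finite chain of $t$-structures interpolating between $\cD$ and $\cT$. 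The condition that $\cH_\cD\cap\cH_\cT$ cogenerates $\cH_\cD=\cA$, combined with the bound $\cD^{\leq -n}\subseteq\cT^{\leq 0}\subseteq\cD^{\leq 0}$, should translate into the assertion that every object of $\cA$---and, inductively, every object of each intermediate heart---has a finite coresolution by objects of the appropriate intersection heart; feeding such coresolutions through the truncation triangles of the successive $\cV_j$ and inducting on $n$ should produce the required bijections. The delicate point, and the reason filterability by itself does not suffice, is that a bounded $t$-structure need not have the property that the bounded derived category of its heart recovers the ambient category---already a general Happel--Reiten--Smal{\o} tilt can fail to be a derived equivalence---so the argument must genuinely exploit the interplay between the chain of co-aisles and the (co)resolution property; this comparison of higher $\Ext$-groups along the chain is where I expect the real work to lie.

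Essential surjectivity is then comparatively soft. Once $\mathrm{real}_\cT$ is known to be full, its essential image is a triangulated subcategory of $D(\cA)$, and it contains $\cH_\cT$; using $\cD^{\leq -n}\subseteq\cT^{\leq 0}\subseteq\cD^{\leq 0}$ one checks that every object of $\cA=\cH_\cD$ has $\cT$-cohomology concentrated in the finitely many degrees $0,\dots,n$, hence is built from objects of $\cH_\cT$ by finitely many $\cT$-truncation triangles and so lies in the essential image; since $\cA$ generates $D^b(\cA)$ as a triangulated category, the essential image is all of $D^b(\cA)$. Together with full faithfulness this yields $D^b(\cH_\cT)\simeq D^b(\cA)$, and the passage to the unbounded derived categories is completed as in the first paragraph, the $n$-cotilting case being treated dually (for instance by passing to opposite categories). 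The step I expect to resist is full faithfulness in degrees $i\geq 2$, that is, getting the inductive comparison of higher $\Ext$-groups along the filtration to go through.
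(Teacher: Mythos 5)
First, a point of comparison: the paper does not prove this statement at all --- it is imported verbatim from \cite[Theorem~5.7]{FMT} --- so there is no internal proof to measure your attempt against; it must stand on its own. Judged that way, your proposal is a plausible strategy outline but not a proof, because the step that carries all the content is explicitly left open. You correctly reduce full faithfulness of $\mathrm{real}_\cT$ to the bijectivity of the comparison maps $\Ext^i_{\cH_\cT}(X,Y)\to\Hom_{D(\cA)}(X,Y[i])$ for $i\geq 2$, correctly observe that this is exactly where a general bounded $t$-structure fails to induce a derived equivalence, and then say that feeding coresolutions through the truncation triangles of the interpolating $t$-structures $\cV_j$ ``should produce the required bijections'' --- and you close by flagging this as the step you expect to resist. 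That step \emph{is} the theorem. What is missing is the mechanism by which the chain $\cD=\cV_0,\cV_{-1},\dots,\cV_{-n}=\cT$ actually reduces the problem: the natural (and, in \cite{FMT}, the actual) use of filterability of type $(n,0)$ is that consecutive $t$-structures $\cV_{j}$ and $\cV_{j-1}$ differ by a single Happel--Reiten--Smal{\o} tilt at a torsion pair in the intermediate heart, so the $n$-tilting pair factors into $n$ consecutive $1$-tilting pairs and one inducts on these steps, invoking the known $1$-tilting derived equivalence at each stage. Your sketch never identifies the torsion pairs on the intermediate hearts, nor verifies that the cogeneration hypothesis (stated only for $\cH_\cD\cap\cH_\cT$ inside $\cH_\cD$) propagates to each link of the chain; that propagation is the actual work, and inducting ``on $\Ext$-degrees along the filtration,'' as you propose, is not obviously the same induction and is not carried out.

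A second, independent gap is the passage from $D^b$ to $D$. The realization functor of \cite{MR751966} is defined on $D^b(\cH_\cT)$, and your bootstrap to the unbounded statement assumes that $\cH_\cT$ is Grothendieck and that $\mathrm{real}_\cT$ preserves coproducts; neither is among the hypotheses of the theorem, which is stated for an arbitrary abelian category $\cA$, and even the existence and good behaviour of $D(\cH_\cT)$ requires an argument. Since the theorem as stated asserts an equivalence of \emph{unbounded} derived categories extending the inclusion $\cH_\cT\subseteq D(\cA)$, this step cannot be waved through. (Your essential-surjectivity argument and the identification $\cV_0=\cD$, $\cV_{-n}=\cT$ are fine, and the overall architecture --- realization functor plus the interpolating chain --- is a reasonable hybrid of the two cited approaches, \cite{FMS} using the realization functor without filterability and \cite{FMT} using filterability without it; but as written the proposal defers precisely the parts that are hard.)
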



\section{The filterability of $t$-structures generated by non-classical tilting modules}

Throughout this section, $R$ is an associative ring with identity and $n$ is a non-negative integer. We recall that a right $R$-module $T$ is called {\it $n$-tilting} if it satisfies the following conditions:
\begin{enumerate}
\item[\rm (T1)]There exists an exact sequence in $\rMod R$
\[
\xymatrix{0\ar[r]& P_n \ar[r]& \dots \ar[r]& P_1\ar[r]& P_0\ar[r]& T\ar[r]&0}
\]
where $P_i$ is projective, for every $i=0,\dots, n$.
\item[\rm (T2)]$\Ext^i_R(T,T^{(\alpha)})=0$, for every $i>0$ and for every cardinal $\alpha$.
\item[\rm (T3)]There exists an exact sequence in $\rMod R$
\[
\xymatrix{0\ar[r]& R \ar[r]& T_0 \ar[r]& T_1\ar[r]& \dots\ar[r]& T_n\ar[r]&0}
\]
where $T_i$ is a direct summand of a direct sum of copies of $T$, for every $i=0,\dots, n$.
\end{enumerate}

\begin{remark}\label{generator}
Let $T \in \rMod R$ be an $n$-tilting module. By axiom (T3), $T$ generates $R$ and so all of the derived category $D(R)$, that is, the smallest localizing subcategory that contains $T$ is $D(R)$. This implies also that for any $X \in D(R)$, if 
$\Hom_{D(R)}(T[i],X)=0$ for every $i\in \bZ$ then $X=0$. Indeed, the full subcategory $\cU$ of $D(R)$ consisting of those objects $Y$ for which $\Hom_{D(R)}(Y[i], X)=0$ is localizing. Since $T \in \cU$ then $D(R) \subseteq \cU$, thus $X=0$ as the identity morphism on $X$ is zero.
\end{remark}

\begin{lemma}\cite[Lemma 3.1, Proposition 3.2]{AJS}\label{ajs}
Let $E$ be a complex of $R$-modules. Let $\Susp_{D(R)}(E)$ be the smallest suspended subcategory of $D(R)$ which is closed under taking set-indexed direct sums and contains $E$. Then $\Susp_{D(R)}(E)$ is an aisle in $D(R)$ and the corresponding co-aisle is
\[
\Susp_{D(R)}(E)^\bot[1]=\left\{B \in D(R) \,|\, \Hom_{D(R)}(E,B[j])=0, \forall j<0  \right\}.
\]
\end{lemma}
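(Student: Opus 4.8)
The plan is to check directly that $\cU:=\Susp_{D(R)}(E)$ is an aisle and then read off the co-aisle, the latter being purely formal. Recall that for any aisle $\cU$ the co-aisle is $\cU^{\perp}[1]$. Now, since $E[i]\in\cU$ for all $i\ge 0$, the class $\{U\in D(R)\mid \Hom_{D(R)}(U[i],B)=0,\ \forall i\ge 0\}$ is suspended, closed under set-indexed direct sums, and — as soon as $\Hom_{D(R)}(E,B[j])=0$ for all $j\le 0$ — contains $E$; hence it contains $\cU$, so $\cU^{\perp}=\{B\mid \Hom_{D(R)}(E,B[j])=0,\ \forall j\le 0\}$. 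Shifting by $[1]$ gives $\cU^{\perp}[1]=\{B\mid\Hom_{D(R)}(E,B[j])=0,\ \forall j<0\}$, the asserted description. So everything reduces to the statement that $\cU$ is an aisle.

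By construction $\cU$ is closed under positive shifts, extensions and set-indexed direct sums, hence also under cones (the cone of $U\to V$ is an extension of $U[1]$ by $V$) and therefore under homotopy colimits of chains. The only nontrivial point is the existence, for each $X\in D(R)$, of a truncation triangle $A\to X\to B\to A[1]$ with $A\in\cU$ and $B\in\cU^{\perp}$. I would build $A$ as an ``$E$-cellular approximation'' of $X$ by a small-object-type construction: set $A_{0}=0$; given $\phi_{i}\colon A_{i}\to X$ with cone $C_{i}$ (a triangle $A_{i}\to X\to C_{i}\xrightarrow{w_{i}}A_{i}[1]$), let
\[
D_{i}=\bigoplus_{m\ge 0}\ \bigoplus_{f\in\Hom_{D(R)}(E[m],C_{i})}E[m]
\]
with its canonical map $\varepsilon_{i}\colon D_{i}\to C_{i}$, and put $A_{i+1}=\Cone\bigl(D_{i}[-1]\xrightarrow{w_{i}[-1]\circ\varepsilon_{i}[-1]}A_{i}\bigr)$. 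Since $\phi_{i}\circ w_{i}[-1]=0$, the map $\phi_{i}$ extends to some $\phi_{i+1}\colon A_{i+1}\to X$, and a routine octahedron identifies its cone with $C_{i+1}=\Cone(\varepsilon_{i})$, giving a triangle $D_{i}\xrightarrow{\varepsilon_{i}}C_{i}\to C_{i+1}\to D_{i}[1]$. At limit stages take homotopy colimits. Each $A_{i}$ is built from direct sums of positive shifts of $E$ by iterated cones and homotopy colimits, hence lies in $\cU$; setting $A=\hocolim_{i}A_{i}$ one gets $A\in\cU$ together with a triangle $A\to X\to B\to A[1]$ in which $B=\hocolim_{i}C_{i}$.

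It remains to see $B\in\cU^{\perp}$, and by the reduction above it is enough to show $\Hom_{D(R)}(E[m],B)=0$ for all $m\ge 0$. By construction $\varepsilon_{i}$ induces a surjection $\Hom(E[m],D_{i})\twoheadrightarrow\Hom(E[m],C_{i})$ — every $f$ is one of the summand inclusions followed by $\varepsilon_{i}$ — so applying $\Hom(E[m],-)$ to $D_{i}\to C_{i}\to C_{i+1}\to D_{i}[1]$ forces the transition map $\Hom(E[m],C_{i})\to\Hom(E[m],C_{i+1})$ to vanish; hence $\varinjlim_{i}\Hom(E[m],C_{i})=0$. Converting this into $\Hom(E[m],B)=0$ requires the canonical comparison $\varinjlim_{i}\Hom(E[m],C_{i})\to\Hom(E[m],\hocolim_{i}C_{i})$ to be an isomorphism, and this is the one genuinely delicate point: a complex $E$ need not be compact, so a countable chain does not suffice. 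The remedy is to run the construction transfinitely up to a regular cardinal $\gamma$ for which $E$ is $\gamma$-small (every object of the well-generated category $D(R)$ is $\gamma$-small for a suitable $\gamma$), taking homotopy colimits at limit ordinals; then $\Hom(E[m],B)=\varinjlim_{i<\gamma}\Hom(E[m],C_{i})=0$, since every element of $\Hom(E[m],C_{i})$ already dies at stage $i+1$. Once $B\in\cU^{\perp}$, the triangle $A\to X\to B\to A[1]$ is the required truncation and $\cU$ is an aisle. In short, everything apart from the compatibility of $\Hom_{D(R)}(E,-)$ with (transfinite) homotopy colimits is bookkeeping with octahedra and coproducts; that compatibility, secured by a smallness argument, is where the real work lies.
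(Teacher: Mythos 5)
The paper does not actually prove this lemma: it is imported wholesale from \cite[Lemma~3.1, Proposition~3.2]{AJS}, so there is no internal argument to compare against. What you have written is essentially a reconstruction of the AJS proof. The formal identification of $\Susp_{D(R)}(E)^\bot[1]$ is correct, and the cellular-tower construction of the truncation triangle --- the coproduct $D_i$ indexed by all maps $E[m]\to C_i$, the octahedron yielding $D_i\to C_i\to C_{i+1}$, and the consequent vanishing of the transition maps on $\Hom_{D(R)}(E[m],-)$ --- is exactly the standard mechanism.

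The step you yourself flag as delicate is, however, not closed as written, for two concrete reasons. First, transfinite homotopy colimits ``at limit stages'' do not exist in a bare triangulated category: the Milnor telescope only handles chains of countable cofinality, and your terminal colimit over a regular uncountable $\gamma$ has cofinality $\gamma$. To make sense of the tower one must descend to complexes: replace $E$ by a K-projective resolution $P$, realize each $A_i\to A_{i+1}$ by a degreewise split monomorphism of complexes so that honest colimits compute homotopy colimits, and take ordinary colimits at limit ordinals. Second, the smallness you invoke --- $\gamma$-smallness of $E$ in the well-generated category $D(R)$, i.e.\ factorization of maps into coproducts through small subcoproducts --- is not the property the argument needs; what is required is that $\Hom_{D(R)}(E[m],-)=\Hom_{K(R)}(P[m],-)$ commute with the $\gamma$-filtered colimit of the tower, and that follows from $\gamma$-presentability of $P$ (and of its cylinder) in the category $Ch(R)$, not from well-generatedness of the triangulated category. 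With these two repairs your argument becomes the AJS proof; as it stands, the final smallness step is a genuine gap, though a standard one to fill.
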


Let $T$ be an $n$-tilting module in $\rMod R$. By Lemma~\ref{ajs}, $T$ generates a $t$-structure $\cT=(\cT^{\leq0},\cT^{\geq0})$ in $D(R)$, where:
\[
\cT^{\leq0}=\Susp_{D(R)}(T), \qquad \cT^{\geq0}=\left\{B \in D(R) \,|\, \Hom_{D(R)}(E,B[j])=0, \forall j<0  \right\}.
\]
We have:

\begin{lemma}\label{aisle}
Let $T$ be a $n$-tilting module. Then
\[
\Susp_{D(R)}(T)=\left\{A \in D(R) \,|\, \Hom_{D(R)}(T,A[j])=0, \forall j>0  \right\}.
\]
\end{lemma}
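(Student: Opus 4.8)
The plan is to prove the two inclusions separately. Write $\cU := \{A \in D(R) \mid \Hom_{D(R)}(T, A[j]) = 0,\ \forall j > 0\}$ for the right-hand side. The inclusion $\Susp_{D(R)}(T) \subseteq \cU$ is the easy direction: by condition (T2), $\Hom_{D(R)}(T, T^{(\alpha)}[j]) = \Ext^j_R(T, T^{(\alpha)}) = 0$ for all $j > 0$ and all cardinals $\alpha$, so $T$ itself lies in $\cU$; then one checks that $\cU$ is a suspended subcategory closed under set-indexed direct sums (it is closed under positive shifts since $j > 0$ is stable under increasing the shift, closed under extensions by the long exact sequence of $\Hom_{D(R)}(T, -)$, and closed under direct sums because $T$ is a compact object in $D(R)$ — it is a perfect complex by (T1)). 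Since $\Susp_{D(R)}(T)$ is the smallest such subcategory containing $T$, the inclusion follows.

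For the reverse inclusion $\cU \subseteq \Susp_{D(R)}(T)$, I would argue by truncation using the $t$-structure $\cT$ already produced by Lemma~\ref{ajs}. Take $A \in \cU$ and form the approximation triangle $A' \to A \to A'' \to A'[1]$ with $A' \in \cT^{\leq 0} = \Susp_{D(R)}(T)$ and $A'' \in \cT^{\geq 1}$. By the first (easy) inclusion, $A' \in \cU$, and $\cU$ is closed under cones of this shape because of the long exact sequence obtained by applying $\Hom_{D(R)}(T, -)$; hence $A'' \in \cU$ as well. So it suffices to show that an object lying in both $\cT^{\geq 1}$ and $\cU$ is zero. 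Now $A'' \in \cT^{\geq 1}$ means $\Hom_{D(R)}(T, A''[j]) = 0$ for all $j \le 0$ (this is the description of $\cT^{\geq 0}$ from Lemma~\ref{ajs}, shifted), and $A'' \in \cU$ gives $\Hom_{D(R)}(T, A''[j]) = 0$ for all $j > 0$; together, $\Hom_{D(R)}(T[i], A'') = 0$ for every $i \in \bZ$. By Remark~\ref{generator}, this forces $A'' = 0$, whence $A \cong A' \in \Susp_{D(R)}(T)$.

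The step I expect to require the most care is verifying that $\cU$ is closed under arbitrary set-indexed direct sums, since this is where the finiteness hypothesis (T1) is genuinely used: one must know that $T$, regarded as an object of $D(R)$, is compact, so that $\Hom_{D(R)}(T, \bigoplus_\lambda A_\lambda[j]) \cong \bigoplus_\lambda \Hom_{D(R)}(T, A_\lambda[j])$. This is standard — a module admitting a finite projective resolution is a perfect complex, hence compact — but it is the one place where the argument would break down for a tilting module of infinite projective dimension. Everything else is a formal manipulation with the long exact sequences of the cohomological functors $\Hom_{D(R)}(T, -[j])$ together with the generation statement of Remark~\ref{generator}.
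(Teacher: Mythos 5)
Your overall architecture coincides with the paper's: show the right-hand side $\cU$ is a suspended, coproduct-closed subcategory containing $T$ to get $\Susp_{D(R)}(T)\subseteq\cU$, then for the converse truncate $A\in\cU$ against the $t$-structure of Lemma~\ref{ajs} and kill the co-aisle piece using Remark~\ref{generator}. The second half of your argument is essentially verbatim the paper's proof and is fine.

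The gap is exactly at the step you flagged as delicate: closure of $\cU$ under set-indexed direct sums. Your justification --- ``$T$ is a perfect complex by (T1), hence compact, so $\Hom_{D(R)}(T,-)$ commutes with coproducts'' --- is false in the setting of this paper. Condition (T1) provides a finite resolution by projective modules that need \emph{not} be finitely generated; a perfect complex requires finitely generated projectives. Non-classical tilting modules are typically not compact: for $R=\bZ$ the $1$-tilting module $T=\mathbb{Q}\oplus\mathbb{Q}/\bZ$ has projective dimension $1$ but $\Ext^1_{\bZ}(\mathbb{Q},-)$ does not commute with infinite direct sums (e.g.\ $\Ext^1_\bZ(\mathbb{Q},\bZ)\neq 0$ while the comparison map from $\bigoplus_{\bN}\Ext^1_\bZ(\mathbb{Q},\bZ)$ to $\Ext^1_\bZ(\mathbb{Q},\bZ^{(\bN)})$ is not an isomorphism). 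Indeed, if $T$ were compact the module would be classical and the result would already be contained in \cite{FMT}; the whole point of the paper is to drop that hypothesis. The paper circumvents compactness as follows: replace $T$ by its bounded projective resolution $P^\bullet$, so that $\Hom_{D(R)}(T,-)\simeq\Hom_{K(R)}(P^\bullet,-)$, which \emph{does} commute with products; then use the termwise monomorphism of complexes $\bigoplus_i X_i\hookrightarrow\prod_i X_i$ and the vanishing of $\Hom_{D(R)}(T,\prod_i X_i[j])\simeq\prod_i\Hom_{D(R)}(T,X_i[j])$ to conclude that every chain map $P^\bullet\to\bigoplus_i X_i[j]$ is nullhomotopic. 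You need an argument of this kind (or an appeal to the known fact that $T^{\perp_\infty}$ is closed under direct sums for tilting modules of finite projective dimension); as written, your proof of the inclusion $\Susp_{D(R)}(T)\subseteq\cU$ does not go through.
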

\begin{proof}
Let $\cU$ be the following full subcategory of $D(R)$:
\[
\cU=\left\{A \in D(R) \,|\, \Hom_{D(R)}(T,A[j])=0, \forall j>0  \right\}.
\]
We shall show that $\cU$ is a suspended subcategory of $D(R)$ containing $T$, that $\cU$ is closed under taking direct sums in $D(R)$ and that $\cU\subseteq \Susp_{D(R)}(T)$.

First of all, $T$ belongs to $\cU$ by axiom (T2). It is clear that $\cU$ is closed under positive shifts. Let us prove that $\cU$ is closed under extensions. Suppose that 
\[
A\to B\to C \stackrel{+}\to
\]
is a distinguished triangle in $D(R)$ with $A, C\in \cU$. Let $j>0$ and $f \colon T \to B[j]$ be a morphism in $D(R)$. As $C\in \cU$ we get that $\Hom_{D(R)}(T,C[j])=0$, so $f$ factors through a morphism $T \to A[j]$, which is the zero morphism since $A\in \cU$. Hence $f=0$. This shows that $B\in \cU$.

Now let $(X_i)_{i\in I}$ be a family of objects of $\cU$ and let us prove that their direct sum in $D(R)$ also belong to $\cU$. Both the direct sum $\oplus_{i\in I}X_i$ and the product $\prod_{i\in I}X_i$ in $D(R)$ are given by taking termwise direct sums and products, respectively, of the family of complexes $(X_i)_{i\in I}$. Hence there is a canonical monomorphism of complexes $\oplus_{i\in I}X_i \hookrightarrow \prod_{i\in I}X_i$. By axiom (T1), $T$ has a projective resolution $P^\bullet \to T$ of length $n$. 
Let $j>0$ and consider the following diagram:
\[
\xymatrixcolsep{0.1pc}\xymatrix{\Hom_{D(R)}(T, \oplus_{i\in I}X_i [j]) && \Hom_{D(R)}(T, \prod_{i\in I}X_i[j]) \simeq \prod_{i\in I}\Hom_{D(R)}(T, X_i[j])=0\\
\Hom_{K(R)}(P^\bullet, \oplus_{i\in I}X_i [j])\ar[rr]\ar[u]^{\simeq} && \Hom_{K(R)}(P^\bullet, \prod_{i\in I}X_i [j])\ar[u]^{\simeq}\\
\Hom_{Ch(R)}(P^\bullet, \oplus_{i\in I}X_i [j])\ar@{^{(}->}[rr]\ar@{->>}[u] && \Hom_{Ch(R)}(P^\bullet, \prod_{i\in I}X_i [j])\ar@{->>}[u]\\
}
\]
This shows that any morphism in $\Hom_{Ch(R)}(P^\bullet, \prod_{i\in I}X_i [j])$ is nullhomotopic, hence so is any morphism in $\Hom_{Ch(R)}(P^\bullet, \oplus_{i\in I}X_i [j])$. It follows that \[
\Hom_{D(R)}(T, \oplus_{i\in I}X_i [j])=0,
\]
so $\oplus_{i\in I}X_i \in \cU$. Notice that by what we have showed above, from the minimality of $\Susp_{D(R)}(T)$ we get that 
$\Susp_{D(R)}(T) \subseteq \cU$.

Finally, let us show that $\cU\subseteq \Susp_{D(R)}(T)$. Let $X \in \cU$ and let 
\[
A_X\to X\to B_X \stackrel{+}\to
\]
be the distinguished triangle of $X$ with respect to the $t$-structure $\cT$. We prove that $B_X=0$. To do this, we use the fact that 
$T$ is a generator of $D(R)$ (see Remark~\ref{generator}). So let $i \in \bZ$ and $f \colon T[i] \to B$ be a morphism in $D(R)$. If $i\geq 0$ then $f=0$ because $B \in \Susp_{D(R)}(T)^\bot$. If $i<0$ then $\Hom_{D(R)}(T[i], A_X[1])=0$ since $1-i>0$ and $A_X \in \Susp_{D(R)}(T) \subseteq \cU$. Then $f$ factors through a morphism $T[i] \to X$, which is zero because $X \in \cU$, hence $f=0$. This shows that $\Hom_{D(R)}(T[i], B_X)=0$ for every $i\in \bZ$. Therefore $B_X=0$ and $X\simeq A_X \in \Susp_{D(R)}(T)$.
\end{proof}

Let $\cD=(\cD^{\leq0},\cD^{\geq0})$ be the natural $t$-structure on $D(R)$. From the natural isomorphism $\Hom_{D(R)}(R,A[j])\simeq H^j(A)$, we see that the $t$-structure generated by the $0$-tilting module $R$ coincides with $\cD$. Hence the aisle $\cD^{\leq0}$ (respectively, the co-aisle $\cD^{\geq0}$) consists of those objects $A$ of $D(R)$ for which $\Hom_{D(R)}(R,A[j])=0$, for every $j>0$ (respectively, for every $j<0$).

\begin{lemma}\label{type}
The pair $(\cD,\cT)$ is of type $(n,0)$.
\end{lemma}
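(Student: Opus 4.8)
The plan is to verify that $(\cD,\cT)$ satisfies the chain of inclusions $\cD^{\leq -n}\subseteq\cT^{\leq 0}\subseteq\cD^{\leq 0}$, and then to check that both ends are sharp: that $0$ is the largest integer $k$ with $\cT^{\leq k}\subseteq\cD^{\leq 0}$, and that $n$ is the smallest natural number with $\cD^{\leq -n}\subseteq\cT^{\leq 0}$.

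The inclusion $\cT^{\leq 0}\subseteq\cD^{\leq 0}$ is immediate: viewed as a stalk complex concentrated in degree $0$, the module $T$ has no positive cohomology, so $T\in\cD^{\leq 0}$; since $\cD^{\leq 0}$ is a suspended subcategory of $D(R)$ closed under set-indexed direct sums (cohomology commutes with coproducts), the minimality of $\cT^{\leq 0}=\Susp_{D(R)}(T)$ gives $\cT^{\leq 0}\subseteq\cD^{\leq 0}$.

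For $\cD^{\leq -n}\subseteq\cT^{\leq 0}$ I would use the description of the aisle from Lemma~\ref{aisle}: it suffices to show $\Hom_{D(R)}(T,A[j])=0$ for every $A\in\cD^{\leq -n}$ and every $j>0$. Fix a projective resolution $P^\bullet$ of $T$ concentrated in the degrees $0,-1,\dots,-n$ (axiom (T1)); being a bounded complex of projectives, it computes $\Hom_{D(R)}(T,A[j])\cong\Hom_{K(R)}(P^\bullet,A[j])$. After replacing $A$ by a quasi-isomorphic complex vanishing in all degrees $>-n$, the components of a chain map $P^\bullet\to A[j]$ land in the degrees $j,j-1,\dots,j-n$ of $A$ (degree $-i$ of $P^\bullet$ maps to $A^{j-i}$), and all of these are $\geq j-n>-n$ once $j>0$, so the map is null. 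This little degree count is the only step that needs care; alternatively one could dualize, using (T3) to exhibit $R$ as built from $T_0,T_1[-1],\dots,T_n[-n]$ with $T_i\in\Add T$ by $n+1$ iterated triangles, and then deduce $\cT^{\geq 0}\subseteq\cD^{\geq -n}$.

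It remains to see sharpness. The object $T[-1]$ lies in $\cT^{\leq 0}[-1]=\cT^{\leq 1}$ but has $H^1(T[-1])=T\neq 0$, so $\cT^{\leq 1}\not\subseteq\cD^{\leq 0}$ and the shift is exactly $0$. For the gap, assume $n\geq 1$ (for $n=0$ there is nothing further to check) and pick a module $M$ with $\Ext^n_R(T,M)\neq 0$, which exists since the projective dimension of $T$ is $n$. The stalk complex $M[n-1]$ lies in $\cD^{\leq -(n-1)}$, while $\Hom_{D(R)}(T,(M[n-1])[1])=\Hom_{D(R)}(T,M[n])=\Ext^n_R(T,M)\neq 0$ with $1>0$, so $M[n-1]\notin\cT^{\leq 0}$ by Lemma~\ref{aisle}. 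Hence $\cD^{\leq -(n-1)}\not\subseteq\cT^{\leq 0}$, the gap is exactly $n$, and $(\cD,\cT)$ is of type $(n,0)$.
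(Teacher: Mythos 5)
Your proof is correct, and for the key inclusion $\cD^{\leq -n}\subseteq\cT^{\leq 0}$ it takes a genuinely different route from the paper. The paper first treats a bounded-below $X\in\cD^{\leq -n}$ by iteratively factoring a morphism $T\to X[j]$ through successive truncations $\delta^{\leq m}(X[j])$, using $\Ext^{i}_R(T,-)=0$ for $i>n$ at each stage, and then handles arbitrary $X$ separately by writing it as the homotopy colimit of its brutal truncations and invoking closure of $\cT^{\leq0}$ under homotopy colimits. You instead compute $\Hom_{D(R)}(T,A[j])\cong\Hom_{K(R)}(P^\bullet,A[j])$ with $P^\bullet$ the length-$n$ projective resolution from (T1), replace $A$ by its soft truncation $\tau^{\leq -n}A$, and observe that every component of a chain map lands in a degree where the target vanishes; this kills the Hom group in one stroke. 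Your argument is shorter and, notably, uniform: the degree count is insensitive to boundedness below, so no case division is needed. The remaining pieces --- the inclusion $\cT^{\leq0}\subseteq\cD^{\leq0}$ obtained from the minimality of $\Susp_{D(R)}(T)$ rather than from the dual containment $\cD^{\geq0}\subseteq\cT^{\geq0}$ of co-aisles, and the two sharpness checks --- agree with the paper in substance. One shared caveat, not a defect of your write-up relative to the paper: both you and the author produce $M$ with $\Ext^n_R(T,M)\neq0$ on the grounds that the projective dimension of $T$ ``is $n$,'' whereas (T1) literally only gives projective dimension at most $n$; sharpness of the gap at $n$ genuinely requires $\pd T=n$.
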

\begin{proof}
We shall show that $\cD^{\leq{-n}}\subseteq \cT^{\leq0}\subseteq \cD^{\leq0}$, with $n$ minimal and $0$ maximal integers realizing this chain of inclusions. Since $T \in \cD^{\leq 0}$, for every $Y\in \cD^{\geq0}$ and every $j<0$ one has that 
\[
\Hom_{D(R)}(T,Y[j])=0.
\]
This shows that $\cD^{\geq 0}\subseteq \cT^{\geq0}$ and so, by reversing the inclusions, $\cT^{\leq0}\subseteq \cD^{\leq0}$. Moreover, as $T$ does not lie in $\cT^{\geq1}$ we see that $\cD^{\geq 0}\not\subseteq \cT^{\geq1}$, hence $\cT^{\leq 1} \not\subseteq \cD^{\leq 0}$. It remains to prove that $\cD^{\leq{-n}}\subseteq \cT^{\leq0}$. So let us pick an object $X\in \cD^{\leq{-n}}$. First assume that $X$ is bounded below and let 
$k={\rm min}\left\{m\in \mathbb{Z} \,|\, H^m(X)\neq0  \right\}$. Clearly, $k\leq -n$. Let $j>0$ and $f \colon T\to X[j]$ be a morphism in $D(R)$. Since $T$ has projective dimension $n$, we have that $\Hom_{D(R)}(T, H^{-n-j}(X[j])[n+j])=\Ext_R^{n+j}(T, H^{-n-j}(X[j]))=0$, so $f$ factors trought a morphism $f_1\colon T \to \delta^{\leq{-n-j-1}}(X[j])$. By iterating this procedure we may write $f$ as the composition $f=g\circ f_{-n-k}$, for a suitable morphism $g$ and where $f_{-n-k} \colon T \to H^{k-j}(X[j])[-k+j]$. The latter is the zero morphism since $-k+j\geq n+j>n$. Therefore $f=0$. Now let us assume that $X$ is not bounded below. We can write $X$ as the homotopy colimit of its brutal truncations. More precisely, let us define for every $i\geq0$ the complex $\sigma_{\geq -n-i}(X)$ as follows:
\[
\sigma_{\geq -n-i}(X)^p =
\left\{
\begin{array}{rl}
X^p, & \mbox{if } p \leq -n-i, \\
0, & \mbox{if } p>-n-i
\end{array}
\right.
\] 
Then $X=\hocolim_{i\geq0}\sigma_{\geq -n-i}(X)$. Now for every $i\geq0$ the complex $\sigma_{\geq -n-i}(X)$ is bounded below and belongs to $\cD^{\leq{-n}}$, so by what we have proved above we have $\sigma_{\geq -n-i}(X) \in \cT^{\leq0}$. But $\cT^{\leq0}$ is closed under taking homotopy colimits in $D(R)$ (since it is suspended and closed under coproducts), so $X\in \cT^{\leq0}$. This proves that $\cD^{\leq{-n}}\subseteq \cT^{\leq0}$. Finally, since $T$ is $n$-tilting there exists an $R$-module $M$ such that 
$\Hom_{D(R)}(T,M[n])\neq 0$. Then $M[n-1] \in \cD^{\leq{-n+1}}$, but $M[n-1]\not\in\cT^{\leq0}$. This shows that $\cD^{\leq{-n+1}} \not\subseteq \cT^{\leq0}$.
\end{proof}

\begin{proposition}\label{filt}
The pair $(\cD,\cT)$ is right filterable, that is for any $i\in \mathbb{Z}$ the intersection $\cD^{\geq i}\cap\cT^{\geq 0}$ is a co-aisle.
\end{proposition}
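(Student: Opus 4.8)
The plan is to show, for each $i \in \mathbb{Z}$, that $\cD^{\geq i} \cap \cT^{\geq 0}$ is a co-aisle by producing the required truncation triangles. By Lemma~\ref{type} we already know the pair has type $(n,0)$, so the problem is trivial in two ranges: if $i \leq -n$ then $\cD^{\geq i} \supseteq \cD^{\geq -n} \supseteq \cT^{\geq 0}$, so $\cD^{\geq i} \cap \cT^{\geq 0} = \cT^{\geq 0}$, which is a co-aisle; if $i \geq 1$ then $\cD^{\geq i} \subseteq \cD^{\geq 0} \subseteq \cT^{\geq 0}$, so the intersection is $\cD^{\geq i}$, the co-aisle of a shift of the natural $t$-structure. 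Thus the real content is the finite range $-n+1 \leq i \leq 0$. I would either prove the case $i = 0$ and then deduce the rest by an inductive/shift argument, or handle the whole finite range uniformly; the former seems cleaner.

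For a fixed such $i$, I would take an arbitrary $X \in D(R)$ and build a triangle $A \to X \to B \to A[1]$ with $B \in \cD^{\geq i} \cap \cT^{\geq 0}$ and $A$ in the putative aisle, namely $A \in {}^{\perp}(\cD^{\geq i} \cap \cT^{\geq 0})$. The natural candidate for $B$ is obtained by a two-step truncation: first apply the co-aisle truncation for $\cT$ to get $X \to \delta^{\geq 0}_{\cT}(X)$ with cone in $\cT^{\leq -1}$, then apply the natural truncation $\delta^{\geq i}_{\cD}$ to $\delta^{\geq 0}_{\cT}(X)$. The key point to verify is that applying $\delta^{\geq i}_{\cD}$ does not destroy membership in $\cT^{\geq 0}$ — i.e. that $\cT^{\geq 0}$ is stable under the relevant natural truncation — and, dually, that the "error term" introduced (which lies in $\cD^{\leq i-1} = \cD^{\leq -n+j}$ for the appropriate $j \geq 1$, and a fortiori in $\cD^{\leq -n} \subseteq \cT^{\leq 0}$ by Lemma~\ref{type}) can be absorbed into $A$. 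Concretely, I expect the octahedral axiom applied to the composite $X \to \delta^{\geq 0}_{\cT}(X) \to \delta^{\geq i}_{\cD}(\delta^{\geq 0}_{\cT}(X))$ to yield a triangle whose first term $A$ sits in an extension of something in $\cT^{\leq -1}$ by something in $\cD^{\leq i-1} \subseteq \cT^{\leq 0}$; one then checks $A \in {}^{\perp}(\cD^{\geq i} \cap \cT^{\geq 0})$ by computing $\Hom(A, -)$ against both pieces, using that $\cD^{\geq i} \cap \cT^{\geq 0} \subseteq \cT^{\geq 0}$ kills $\cT^{\leq -1}$ and $\subseteq \cD^{\geq i}$ kills $\cD^{\leq i-1}$.

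The main obstacle is the compatibility claim: that the natural-$t$-structure truncation $\delta^{\geq i}_{\cD}$ sends $\cT^{\geq 0}$ into $\cT^{\geq 0}$ for $i$ in the critical range. Equivalently, if $Y \in \cT^{\geq 0}$ then its "stupid" or natural truncation still satisfies $\Hom_{D(R)}(T, -[j]) = 0$ for all $j < 0$. Here I would use the explicit description of $\cT^{\geq 0}$ from Lemma~\ref{ajs} together with the fact that $T$ has a finite projective resolution of length $n$ (axiom (T1)): for $Y \in \cT^{\geq 0}$, the triangle $\delta^{\leq i-1}_{\cD}(Y) \to Y \to \delta^{\geq i}_{\cD}(Y) \to$ gives, for $j < 0$, a long exact sequence relating $\Hom(T, \delta^{\geq i}_{\cD}(Y)[j])$ to $\Hom(T, Y[j]) = 0$ and $\Hom(T, \delta^{\leq i-1}_{\cD}(Y)[j+1])$; the latter vanishes because $\delta^{\leq i-1}_{\cD}(Y)[j+1] \in \cD^{\leq i-2-j} \subseteq \cD^{\leq -n-1}$ (using $i \leq 0$ and $j \leq -1$), and $\Hom(T, Z) = 0$ whenever $Z \in \cD^{\leq -n-1}$ since $T$ has projective dimension $n$. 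That computation is the heart of the matter; once it is in place, the rest is formal manipulation with truncation triangles and the octahedral axiom, plus the observation that the candidate aisle automatically equals ${}^{\perp}$ of the candidate co-aisle, which is all that is needed to conclude it is a co-aisle.
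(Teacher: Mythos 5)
Your reduction to the range $-n+1\leq i\leq 0$ is fine (indeed $i=0$ is also trivial, since $\cD^{\geq 0}\cap\cT^{\geq 0}=\cD^{\geq 0}$), but the step you yourself identify as ``the heart of the matter'' --- that $\delta^{\geq i}_{\cD}$ preserves $\cT^{\geq 0}$ --- is not established, and the degree count you give for it is wrong. You claim $\delta^{\leq i-1}_{\cD}(Y)[j+1]\in\cD^{\leq i-2-j}\subseteq\cD^{\leq -n-1}$ for $i\leq 0$ and $j\leq -1$; but with the paper's convention $\cD^{\leq a}\subseteq\cD^{\leq b}$ iff $a\leq b$, and since $-j\geq 1$ one has $i-2-j\geq i-1\geq -n$ in the critical range (and $i-2-j\to+\infty$ as $j\to-\infty$), so the asserted inclusion fails for every relevant $j$. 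Concretely, for $j=-1$ the third term of your long exact sequence is $\Hom_{D(R)}(T,\delta^{\leq i-1}_{\cD}(Y))$; since $Y\in\cT^{\geq 0}\subseteq\cD^{\geq -n}$, this object has cohomology in degrees $[-n,\,i-1]$, so the group is controlled by $\Ext^{k}_R(T,-)$ for $1-i\leq k\leq n$, a nonempty range of positive integers $\leq n$ where nothing vanishes for projective-dimension reasons. A finer analysis (e.g.\ the hypercohomology spectral sequence for $\Hom(T,Y[\ast])$) shows that the vanishing of these obstruction groups is a genuine condition on $Y$, not a formality; so the compatibility claim would need a real proof, and I do not see one. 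Since the octahedron construction and the verification that $A\in{}^{\perp}(\cD^{\geq i}\cap\cT^{\geq 0})$ both rest on this claim, the argument as proposed has a gap at its central point.

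The paper's proof is entirely different and sidesteps all of this. It applies Lemma~\ref{ajs} to the single object $E=R[-i]\oplus T$: the class $\Susp_{D(R)}(R[-i]\oplus T)$ is an aisle, and its co-aisle is
\[
\bigl\{B \,:\, \Hom_{D(R)}(R[-i]\oplus T,B[j])=0,\ \forall j<0\bigr\}
=\bigl\{B : \Hom(R[-i],B[j])=0,\ \forall j<0\bigr\}\cap\cT^{\geq 0},
\]
and since $\Hom_{D(R)}(R[-i],B[j])\simeq H^{i+j}(B)$ the first factor is exactly $\cD^{\geq i}$. This is a three-line argument that uses no truncation functors, no case distinction on $i$, and in fact none of the tilting axioms: it only uses that both co-aisles are generated by single objects, so their intersection is the co-aisle generated by the direct sum. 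If you want to salvage your approach, you would first have to prove the truncation-compatibility statement by other means; but given the AJS lemma is already quoted in the paper, the direct route is the one to take.
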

\begin{proof}
Let us fix an integer $i$. Using Lemma~\ref{ajs} we see that the full subcategory $\Susp_{D(R)}(R[-i]\oplus T)$ is the aisle of a $t$-structure, whose corresponding co-aisle is 
\begin{flalign*}
&\Susp_{D(R)}(R[-i]\oplus T)^\bot[1] = \left\{B \in D(R) \,|\, \Hom_{D(R)}(R[-i]\oplus T,B[j])=0, \forall j<0  \right\}&& \\ 
              &=  \scriptstyle{\left\{B \in D(R) \,|\, \Hom_{D(R)}(R[-i],B[j])=0, \forall j<0  \right\}} \cap \scriptstyle{\left\{B \in D(R) \,|\, \Hom_{D(R)}(T,B[j])=0, \forall j<0  \right\}} &&\\
              &=  \cD^{\geq {i}}\cap\cT^{\geq 0}.
\end{flalign*}
This gives the desired result.
\end{proof}

\begin{theorem}\label{ntilting}
The pair $(\cD,\cT)$ is $n$-tilting.
\end{theorem}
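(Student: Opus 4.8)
The plan is to verify directly the two conditions in the definition of an $n$-tilting pair. Condition (1), that $(\cD,\cT)$ be filterable of type $(n,0)$, is already in hand: Lemma~\ref{type} gives the type $(n,0)$, and Proposition~\ref{filt} gives right filterability, hence filterability. So the whole argument reduces to condition (2), namely that the full subcategory $\cH_\cD\cap\cH_\cT$ of $\cH_\cD$ cogenerates $\cH_\cD$.

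The first step is to describe $\cH_\cD\cap\cH_\cT$ concretely. Under the identification of $\cH_\cD$ with $\rMod R$ sending a module $M$ to its stalk complex $M[0]$, every such stalk complex has cohomology concentrated in degree $0$, so it lies in $\cD^{\geq0}$ and hence, by Lemma~\ref{type}, in $\cT^{\geq0}$. Thus the requirement $M[0]\in\cH_\cT=\cT^{\leq0}\cap\cT^{\geq0}$ collapses to $M[0]\in\cT^{\leq0}$. By Lemma~\ref{aisle} this amounts to $\Hom_{D(R)}(T,M[j])=0$ for all $j>0$, and since $\Hom_{D(R)}(T,M[j])=\Ext^j_R(T,M)$ for a module $M$, I conclude that $\cH_\cD\cap\cH_\cT$ is exactly the image under $M\mapsto M[0]$ of the tilting class $\{M\in\rMod R\mid\Ext^j_R(T,M)=0\text{ for all }j>0\}$.

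The second step is to observe that this class cogenerates $\rMod R$. Indeed, every injective $R$-module $E$ belongs to it, because $\Hom_R(-,E)$ is exact and therefore $\Ext^j_R(T,E)=0$ for $j>0$; and every $R$-module embeds into an injective one, e.g.\ into its injective envelope. Since the equivalence $\cH_\cD\simeq\rMod R$ preserves monomorphisms, it follows that every object of $\cH_\cD$ embeds into an object of $\cH_\cD\cap\cH_\cT$, which is precisely condition (2). Together with condition (1) this shows that $(\cD,\cT)$ is an $n$-tilting pair; being moreover right filterable, it is in fact right $n$-tilting, and Theorem~\ref{genheartderiv} then yields the triangle equivalence between $D(\cH_\cT)$ and $D(R)$ announced in the introduction.

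I do not expect a genuine obstacle here: the substantive content is carried by Lemmas~\ref{aisle} and~\ref{type} and Proposition~\ref{filt}, which are already in place. The only point requiring some care is the reduction in the first step — checking that the ``co-aisle half'' of the heart condition is automatically satisfied by stalk complexes, so that nothing survives beyond the vanishing of the higher $\Ext$ groups — together with the routine identification of morphisms in $D(R)$ with $\Ext$ groups over $R$.
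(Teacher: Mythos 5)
Your proposal is correct and follows essentially the same route as the paper: condition (1) is exactly Lemma~\ref{type} plus Proposition~\ref{filt}, and condition (2) is verified by observing that $\cH_\cD\cap\cH_\cT$ contains all injective modules, which is precisely the paper's (much terser) argument. Your explicit identification of $\cH_\cD\cap\cH_\cT$ with the class $\{M\mid\Ext^j_R(T,M)=0\text{ for }j>0\}$ just spells out the detail the paper leaves implicit.
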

\begin{proof}
By Lemma~\ref{type} and Proposition~\ref{filt} the pair $(\cD,\cT)$ is right filterable of type $(n,0)$. The full subcategory $\rMod R \cap \cH_\cT$ of $\rMod R$ is cogenerating since it contains all the injective modules. Hence the result follows.
\end{proof}

We are now able to recover~\cite[Corollary 2.3]{FMS} in the case of modules over a ring:

\begin{corollary}\label{dereq}
Let $R$ be a ring, let $T\in \rMod R$ be an $n$-tilting module and let $\cH_T$ be
the heart of the associated t-structure $\cT=(\cT^{\leq0},\cT^{\geq0})$ in $D(R)$. The inclusion
functor $\cH_\cT \hookrightarrow D(R)$ extends to a triangulated equivalence $D(\cH_\cT) \stackrel{\sim}\to D(A)$.
\end{corollary}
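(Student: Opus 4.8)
The plan is to obtain the statement as an immediate application of Theorem~\ref{genheartderiv}. First I would recall that $\cD$, the natural $t$-structure on $D(R)=D(\rMod R)$, has heart $\cH_\cD$ naturally equivalent to $\rMod R$. Next, by Theorem~\ref{ntilting}, the pair $(\cD,\cT)$ is an $n$-tilting pair of $t$-structures in $D(R)$. Applying Theorem~\ref{genheartderiv} with $\cA=\rMod R$ then yields a triangle equivalence $D(\cH_\cT)\stackrel{\simeq}{\longrightarrow}D(\cH_\cD)=D(\rMod R)=D(R)$ which extends the natural inclusion $\cH_\cT\hookrightarrow D(R)$, exactly as claimed. (The symbol $D(A)$ in the statement is a typographical slip for $D(R)$, inherited from the classical formulation in \cite{FMT}.)

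I would then spell out that the hypotheses of Theorem~\ref{genheartderiv} are genuinely in force. That theorem requires $\cD$ to be the natural $t$-structure of the derived category of an abelian category; here $\cA=\rMod R$ does the job, since $D(R)$ is by definition $D(\rMod R)$ and its natural $t$-structure is the one described in Section~1. It also requires $(\cD,\cT)$ to be an $n$-tilting (or $n$-cotilting) pair; this is precisely the content of Theorem~\ref{ntilting}, whose proof rests on the explicit description of the aisle of $\cT$ (Lemma~\ref{aisle}), the type computation $\cD^{\leq-n}\subseteq\cT^{\leq0}\subseteq\cD^{\leq0}$ with optimal constants (Lemma~\ref{type}), the right filterability of $(\cD,\cT)$ (Proposition~\ref{filt}), and the fact that $\rMod R\cap\cH_\cT$ cogenerates $\rMod R$ because it contains all injective $R$-modules.

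There is essentially no remaining obstacle: the corollary is a formal consequence of results already established, and all the substantive work---the characterization of $\cT$, the type and filterability of $(\cD,\cT)$, and the cogeneration property---has been carried out in the preceding lemmas, proposition and theorem. The only care needed is to match conventions, namely the identification of the heart of the natural $t$-structure with $\rMod R$, so that the target $D(\cH_\cD)$ of the equivalence of Theorem~\ref{genheartderiv} is literally $D(R)$.
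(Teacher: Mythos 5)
Your proposal is correct and matches the paper's own proof, which likewise derives the corollary directly from Theorem~\ref{genheartderiv} combined with Theorem~\ref{ntilting}. Your additional remarks (identifying $\cH_\cD$ with $\rMod R$ and noting that $D(A)$ should read $D(R)$) are accurate elaborations of the same argument.
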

\begin{proof}
This follows from Theorem~\ref{genheartderiv} and Theorem~\ref{ntilting}. 
\end{proof}


\end{document}